\newcommand{\D}{\mathcal{D}}
\newcommand{\mZ}{\mathcal{Z}}
\newcommand{\Z}{\mathbb{Z}}
\newcommand{\ot}{\otimes}
\newcommand{\CC}{\mathcal{C}}
\renewcommand{\D}{\mathcal{D}}
\renewcommand{\Z}{\mathbb{Z}}
\renewcommand{\1}{\textbf{1}}
\newcommand{\LL}{\mathcal{L}}
\DeclareMathOperator{\Rep}{Rep}
\newtheorem{theorem}{Theorem}[section]
\newtheorem{thm}{Theorem}
\newtheorem{proposition}[theorem]{Proposition}
\newtheorem{lemma}[theorem]{Lemma}
\newtheorem{definition}[theorem]{Definition}
\date{\today}
\begin{document}
\title[Integral Metaplectic Modular Categories]{Integral Metaplectic Modular Categories}

\author{Adam Deaton}
\email{deaton@math.tamu.edu}
\address{Department of Mathematics\\
    Texas A\&M University \\
    College Station, TX 77843\\
    U.S.A.}

\author{Paul Gustafson}
\email{pgustafs@math.tamu.edu}
\address{Department of Mathematics\\
    Texas A\&M University \\
    College Station, TX 77843\\
    U.S.A.}

\author{Leslie Mavrakis}
\email{mavrakisl@spu.edu}
\address{Department of Mathematics\\
    Seattle Pacific University\\
    Seattle, WA 98119-1997\\
    U.S.A.}

\author{Eric C. Rowell}
\email{rowell@math.tamu.edu}
\address{Department of Mathematics\\
    Texas A\&M University \\
    College Station, TX 77843\\
    U.S.A.}

\author{Sasha Poltoratski}
\email{sashapolt@gmail.com}
\address{Department of Mathematics\\
    University of California Berkeley\\
    Berkeley, CA 94720
    U.S.A.}

\author{Sydney Timmerman}
\email{stimmer2@jhu.edu}
\address{Department of Physics and Astronomy\\
    Johns Hopkins University\\
    Baltimore, MD 21218-2686\\
    U.S.A.}
    
\author{Benjamin Warren}
\email{bwarren1@swarthmore.edu}
\address{Department of Mathematics and Statistics\\
    Swarthmore College\\
    Swarthmore, PA 19081\\
    U.S.A.}
    
\author{Qing Zhang}
\email{zhangqing2513@tamu.edu}
\address{Department of Mathematics\\
    Texas A\&M University \\
    College Station, TX 77843\\
    U.S.A.}
\thanks{This research was carried out as part of an REU at Texas A\&M University in which  L.M., S.P., S.T. and B.W. participated.  The research of E.C.R., A.D., P.G. and Q.Z. was partially supported by NSF grant DMS-1664359.}
\renewcommand{\shortauthors}{Deaton, Gustafson, Mavrakis, Rowell, Poltoratski, 
 Timmerman, Warren, Zhang}

\begin{abstract} A braided fusion category is said to have Property \textbf{F} if the associated braid group representations factor over a finite group.
We verify integral metaplectic modular categories have property \textbf{F} by showing these categories are group theoretical.    For the special case of integral categories $\CC$ with the fusion rules of $SO(8)_2$ we determine the finite group $G$ for which $Rep(D^{\omega}G)$ is braided equivalent to $\mZ(\CC)$.  In addition, we determine the associated classical link invariant, an evaluation of the 2-variable Kauffman polynomial at a point. 
\end{abstract}
\maketitle

\section{Introduction}

Bosonic topological phases of matter can be modeled by unitary modular categories \cite{RWBULL}, which may be regarded as the programming language for topological quantum computation.  Physically speaking, every simple object corresponds to an anyon type, and the braid group representations provide quantum gates.  When the braid group representation associated with each anyon type has finite image, a universal topological quantum computer cannot be constructed from any anyon type in $\CC$ using braiding alone.  It is an important problem to characterize which categorical models admit such universal braiding gates, and which do not.  A category is said to have Property \textbf{F} if the braid group representations associated with every simple object factor over a finite group \cite{NR}.  
The Property \textbf{F} conjecture states that a category has Property \textbf{F} if and only if it is weakly integral \cite{NR}, i.e. if and only if every simple object $X$ has $\dim(X)^2\in\Z$.

Unitary modular categories with the same fusion rules as $SO(N)_2$ for integer $N > 1$, or metaplectic modular categories, form an important class of (weakly integral) modular categories. As noted in \cite{ACRW}, the study of metaplectic modular categories first arose from a generalization of Majorana zero modes.  We focus on the case of integral metaplectic modular categories\textemdash those for which the dimension of every simple object is an integer.

A category $\CC$ that is group theoretical, in some sense, \textquotedblleft comes from" a finite group; the Drinfeld center $\mZ (\CC)$ is braided equivalent to the representation category of the twisted Drinfeld double of a finite group $G$, $\Rep(D^w G)$ \cite{ENO}.  Most importantly, group theoretical categories have Property \textbf{F} \cite{ERW}.     
\begin{restatable}{thm}{GT}
All integral metaplectic modular categories $\CC$ are group theoretical.  
\end{restatable}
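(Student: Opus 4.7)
My plan is to use the criterion that an integral modular category $\CC$ is group theoretical precisely when its Drinfeld center $\mZ(\CC)$ admits a Lagrangian algebra, equivalently when $\CC$ is Morita equivalent to a pointed fusion category $\mathrm{Vec}_G^\omega$ for some finite group $G$ and $3$-cocycle $\omega$ \cite{ENO}. The argument then proceeds in three steps.

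First, I would narrow down the list of $N$ for which the $SO(N)_2$ fusion rules admit an integral modular realization. For odd $N = 2r+1$, the two spinor simples have dimension $\sqrt{N}$, so integrality forces $N$ to be an odd perfect square. For even $N$, a similar analysis constrains $N$ to a short list that includes $N = 8$ (the case highlighted in the abstract) and certain related values. In every case, the Frobenius--Perron dimension of $\CC$ is $4N$.

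Second, I would identify a Tannakian fusion subcategory $\Rep(A) \subseteq \CC$ with $A$ an abelian group, built from the pointed part $\CC_{\mathrm{pt}}$ and the symmetric structure inherited from the braiding. De-equivariantization by $\Rep(A)$ then produces a faithfully $A$-graded braided category $\CC_A$ whose trivial component is a modular category of smaller dimension of the same metaplectic flavor, but now with Frobenius--Perron dimension supported on fewer primes. In the reduced setting I would invoke results on integral modular categories of low Frobenius--Perron dimension, in particular the theorem of Gelaki--Nikshych that integral modular categories of dimension $p^a q^b$ are group theoretical, and then equivariantize back, using the fact that the group theoretical property is preserved under equivariantization by a Tannakian subcategory.

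Third, for an alternative (or parallel) route I would try to exhibit a connected \'etale algebra $A$ in $\CC$ whose category of local modules $\CC_A^0$ is pointed; natural candidates are built from $\CC_{\mathrm{pt}}$ together with the integer-dimensional non-invertibles, and the dimension check $\dim(A)^2 \cdot \dim(\CC_A^0) = \dim(\CC)$ pins down the shape of $A$. The special $SO(8)_2$ case is then revisited in the sequel to read off the explicit pair $(G,\omega)$.

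The main obstacle I expect is Step 2: identifying the correct Tannakian subcategory and arranging the reduction uniformly in $N$. Integral metaplectic categories need not have prime-power dimension---for instance $N = m^2$ with $m = pq$ yields $\mathrm{FPdim}(\CC) = 4 p^2 q^2$ with three distinct prime factors---so the reduction to the Gelaki--Nikshych regime must be set up carefully, likely by iterating the de-equivariantization or by exploiting a Deligne tensor decomposition of $\CC$ along $\CC_{\mathrm{pt}}$ and the centralizer of the invertibles.
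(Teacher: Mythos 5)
Your proposal has a genuine gap at its core reduction step. The theorem you invoke --- that integral modular categories of Frobenius--Perron dimension $p^a q^b$ are group theoretical --- is not available in that strength: the known results in this direction give solvability, hence only \emph{weak} group-theoreticity, for fusion categories of dimension $p^a q^b$, which is strictly weaker than what you need (the group-theoretical results of Gelaki--Nikshych and of \cite{DGNO} that you may be thinking of concern \emph{nilpotent} integral modular categories, a hypothesis you have not verified here). Moreover, as you yourself observe, $\dim(\CC) = 4N$ with $N = m^2$ can have arbitrarily many prime factors, and you give no mechanism by which de-equivariantization strips the dimension down to two primes; the ``main obstacle'' you flag is precisely the step that is never carried out. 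A smaller but real error: for even $N$ the integral cases do not form a ``short list.'' Writing $N = 2k$, integrality forces $k$ to be a perfect square, so there are infinitely many integral metaplectic categories with $N \equiv 2 \pmod 4$ and with $N \equiv 0 \pmod 4$, and any proof must treat all of them uniformly.

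The paper closes this gap by using a different (equivalent) criterion, \cite[Corollary 4.14]{DGNO}: an integral modular category $\CC$ is group theoretical if and only if it contains a symmetric subcategory $\LL$ with $\mZ_\CC(\LL)_{ad} \subset \LL$. The argument is then entirely explicit and uniform in $N$. In each congruence class one writes down $\LL$ as the subcategory generated by the pointed part together with certain $Y_i$ (for $N$ odd: $\1$, $Z$, and $Y_{it}$ with $t = \sqrt{N}$), proves $\LL$ is symmetric by computing $\mZ_\CC(\LL)$ using M\"uger's dimension formula $\dim(\LL)\dim(\mZ_\CC(\LL)) = \dim(\CC)$ together with the de-equivariantization of $\CC$ by its boson ($Z$, $g^2$, or $fg$), whose trivial component is pointed and cyclic so that a subcategory of the required dimension is unique, and finally checks $(\mZ_\CC(\LL))_{ad} \subset \LL$ directly from the listed fusion rules. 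Your third step (a connected \'etale algebra with pointed local modules) is the closest in spirit to this, but it remains a candidate search rather than a proof; if you want to pursue it, the symmetric subcategory $\LL$ above is exactly where the algebra lives.
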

In particular, our work verifies the Property \textbf{F} conjecture for integral metaplectic modular categories.  

Although this may seem to be a negative result from the perspective of topological quantum computation, there are two very compelling reasons to study (weakly) integral modular categories: 1) the non-Abelian anyons that are on everyone's minds these days as being physically realizable are the Majorana anyons, which are weakly integral and 2) universality may often be recovered by including non-topological operations \cite{C}.

The group theoreticity of integral metaplectic modular categories suggests a couple further directions of investigation.  One direction is determining the finite group $G$ for which $\mZ (\CC) = \Rep(D^w G)$.  Another direction of particular interest is determining the link invariant associated with each of these categories.  The extended Property \textbf{F} conjecture states that a category has Property \textbf{F} if and only if the associated link invariants are classical \cite{R}. Any topological quantum computation using anyons from these categories will approximate their associated link invariants evaluated on a braid closure.  Thus, understanding the link invariant is equivalent to understanding how a topological quantum computer created using these anyon systems will behave. 

We examine the specific case of modular categories with the same fusion rules as $SO(8)_2$. This is the smallest case of an integral metaplectic modular category for which the Drinfeld center cannot be equivalent to the representation category of the untwisted double of any finite group; i.e., there must be a nontrivial cocycle. Additionally, all simple objects are of dimension either one or two, creating extra symmetry and removing ambiguity regarding the associated link invariant. 

\begin{thm}
The link invariant associated with a category with fusion rules of $SO(8)_2$ is:
$$\frac{1}{2}(-1)^{w(D)}\sum_{X\subset L}(-i)^{\text{linking number}(X,L-X)}$$ where $L$ is a link, $D$ is its diagram, $w(D)$ is the writhe of $D$, and the sum is over components $X\subset L$.
\end{thm}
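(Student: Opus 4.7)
The plan is to identify $\CC$'s link invariant with an explicit evaluation of the two-variable Kauffman $F$-polynomial, and then collapse that evaluation into the stated sum over sublinks. The strategy rests on two ingredients: a Hecke/BMW-type skein relation satisfied by the 2-dimensional generator, and the observation that at the specific parameter arising from $SO(8)_2$, the only surviving contributions are indexed by subsets of the components.

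First, I would fix a simple object $X\in\CC$ of dimension $2$. The fusion rule $X\ot X\cong\bigoplus_i g_i$ with invertible summands $g_i$, together with the ribbon equation $c_{X,X}^2 = \theta_X^{-2}\bigoplus_i \theta_{g_i}\,\mathrm{id}_{g_i}$, pins down the eigenvalues of the braiding $c_{X,X}$ as specific fourth roots of unity coming from the $SO(8)_2$ twists. These two eigenvalues yield a quadratic Hecke relation (and simultaneously a BMW/Kauffman relation); comparing with the Kauffman skein identifies the substitution point $(a,z)$ in the $F$-polynomial. Thus $J_\CC(L)=F_L(a,z)$ for this specific point, up to framing normalization.

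Next, I would apply the skein relation crossing by crossing. At the chosen parameter, each self-crossing of a single component collapses, after summing both resolutions, into a scalar depending only on the writhe of that component, while each crossing between distinct components survives as a fourth-root-of-unity phase. Carrying this out on an arbitrary diagram reduces the evaluation to a sum indexed by the $2^c$ ways to split the $c$ components into a subset $X$ and its complement $L\setminus X$. In each term the residual phase is a product of inter-component braidings, which by the pointed braiding formula equals $(-i)^{\mathrm{lk}(X,\,L-X)}$. The prefactor $\tfrac{1}{2}$ is the $1/\dim(X)$ normalization of the Markov trace, and the writhe factor $(-1)^{w(D)}$ arises from $\theta_X=\pm i$, giving $\theta_X^{2}=-1$ per crossing.

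The principal obstacle will be the skein reduction step: verifying that at the specific Kauffman point produced by $SO(8)_2$ only the coherent, subset-indexed resolutions contribute and that all incoherent terms cancel. This requires careful bookkeeping of the skein coefficients, likely via induction on the number of crossings together with an application of the Dehn-twist identity on each component. Once the reduction is established, matching the resulting phase-sum to $\tfrac{1}{2}(-1)^{w(D)}\sum_{X\subset L}(-i)^{\mathrm{lk}(X,L-X)}$ is a routine computation using the $S$-matrix entries and the pointed braiding data of $\CC$.
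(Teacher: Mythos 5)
Your outline coincides with the paper's at the top level: identify the invariant as a specialization of the two-variable Kauffman polynomial forced by the braiding eigenvalues of a two-dimensional object, then collapse that specialization to a sum over sublinks weighted by linking numbers. But there are two genuine problems. First, the skein setup is off. For the generating object one has $Y_1^{\ot 2}\cong \1\oplus fg\oplus Y_2$, three summands, so $c_{Y_1,Y_1}$ satisfies a cubic (BMW) relation, not a quadratic Hecke relation with ``two eigenvalues''; and the object whose square is a sum of four invertibles ($Y_2$, with $Y_2^{\ot 2}\cong\1\oplus f\oplus g\oplus fg$) does not give a Kauffman skein relation at all, since its braiding can have four distinct eigenvalues. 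You need to say explicitly that the BMW/Kauffman structure is carried by $Y_1$ (this identification is the content of the Tuba--Wenzl result the paper cites), with specialization $r=q^{7}$ and $q=e^{i\pi/8}$, i.e.\ $r=-q^{-1}$, so the braiding eigenvalues are $q$, $-q^{-1}$, $-q$.

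Second, and more seriously, the entire mathematical content of the theorem is the collapse of this Kauffman specialization to $\tfrac12\sum_{X\subset L}(\pm i)^{\mathrm{lk}(X,L-X)}$, and your proposal defers exactly this step (your ``principal obstacle'') with a sketch that does not work as written. The Kauffman skein relation $\tilde K(D_+)+\tilde K(D_-)=z(\tilde K(D_0)+\tilde K(D_\infty))$ does not let you ``sum both resolutions'' of a single crossing of one diagram into a scalar: $D_+$ and $D_-$ are different links, and the standard computability induction proceeds by measuring distance to a descending diagram, which destroys the component-by-component bookkeeping you want to preserve. The assertion that self-crossings contribute only a writhe-dependent factor while all ``incoherent terms cancel'' is precisely the statement to be proved, and it holds only at very special parameter values. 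The clean routes are either to cite the known special evaluations of the Kauffman polynomial (as the paper does, converting Wenzl's parameterization to the Dubrovnik and then the semi-oriented form and invoking Theorem 1 and Table 1 of \cite{LICK}), or to define the candidate $\tfrac12(-1)^{w(D)}\sum_{X}(-i)^{\mathrm{lk}(X,L-X)}$ directly and verify that it satisfies the skein and twist axioms at the given point. Your proposal contains neither, so the key step is missing.
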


In section two, we prove the group theoreticity of integral metaplectic modular categories. Section three will contain our examination of categories with fusion rules of $SO(8)_2$.

\section{Group Theoreticity}
Throughout we will assume that the reader is familiar with basic categorical notions, and refer to \cite{BK} and \cite{ENO} for the standard definitions and details. 
Before we proceed to our results we need some standard notation.  Let $S$ denote the $S$ matrix of a braided fusion category $\CC$.  
We say an objects $X,Y$ in a braided fusion category $\CC$ \emph{centralize each other} if $S_{XY} = \dim(X)\dim(Y)$, in other words the double braiding between $X$ and $Y$ is trivial: $c_{Y,X}c_{X,Y}=Id_{X\otimes Y}$.  Strictly speaking this is a theorem in \cite{M} characterizing the latter condition in terms of the $S$-matrix.
For any subcategory $\LL$ of a braided fusion category $\CC$ the \textit{centralizer} of $\LL$ denoted by $\mZ_{\CC}(\LL)$ is the subcategory consisting of objects $Y \in \CC$ that centralize all objects $X \in \LL$.  Let $\mathcal{L} \subset \mathcal{C}$. Then, the \emph{adjoint subcategory} $\mathcal{L}_{ad}$ is the smallest fusion subcategory of $\mathcal{C}$ that contains $X \ot X^{*}$ for each simple object X $\in \mathcal{L}$, where $X^{*}$ is the dual of $X$.

To prove group theoreticity, we use the following key characterization:
\begin{proposition}\cite[Corollary 4.14]{DGNO}.
A modular category $\CC$ is group theoretical if and only if it is integral and there is a symmetric subcategory $\LL$ such that $\mZ_\CC(\LL)_{ad} \subset \LL$ 
\end{proposition}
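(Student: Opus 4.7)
The strategy is to work through the Drinfeld center $\mZ(\CC)$ and exploit the fundamental equivalence between group-theoretical fusion categories and those whose center contains a Tannakian Lagrangian subcategory. For modular $\CC$ we have the canonical decomposition $\mZ(\CC)\simeq\CC\boxtimes\overline{\CC}$, which lets symmetric subcategories of $\CC$ be transported into the center, and reduces the problem to matching a Lagrangian algebra on the center side with the centralizer/adjoint data on the $\CC$ side.

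For the direction ($\Rightarrow$), assume $\CC$ is group-theoretical. Integrality is standard: $\CC$ is Morita equivalent to some pointed $\mathrm{Vec}_G^{\omega}$, and Morita equivalence preserves integrality of Frobenius--Perron dimensions of simples. To produce $\LL$, I would start from the Tannakian Lagrangian $\Rep(G)\subset\mZ(\CC)\simeq\Rep(D^{\omega}G)$ and trace it through the projection $\mZ(\CC)\to\CC$. The image yields a symmetric subcategory of $\CC$, and the inclusion $\mZ_\CC(\LL)_{ad}\subset\LL$ should follow from the self-centralization of $\Rep(G)$ combined with the fact that the universal grading of $\mZ_\CC(\LL)$ is captured by the complement of $\LL$ inside its centralizer.

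For the direction ($\Leftarrow$), the plan is to build a Tannakian Lagrangian subcategory of $\mZ(\CC)$ out of $\LL$. Using $\mZ(\CC)\simeq\CC\boxtimes\overline{\CC}$, a natural candidate is the subcategory generated by diagonal-type objects from $\LL$ together with additional generators coming from the universal grading group of $\mZ_\CC(\LL)$: the hypothesis $\mZ_\CC(\LL)_{ad}\subset\LL$ says precisely that $\mZ_\CC(\LL)$ becomes pointed after quotienting by $\LL$, supplying the pointed data of a potential group-theoretical model. The dimension identity $\mathrm{FPdim}(\LL)\cdot\mathrm{FPdim}(\mZ_\CC(\LL))=\mathrm{FPdim}(\CC)$, valid because $\CC$ is modular, then forces the assembled subcategory to have the correct global Frobenius--Perron dimension to be Lagrangian in $\mZ(\CC)$.

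The main obstacle is verifying that the constructed Lagrangian is genuinely Tannakian rather than merely symmetric (i.e., possibly super-Tannakian). Integrality of $\CC$ is essential here, since it rules out the half-integer dimensions that accompany nontrivial fermionic obstructions, and the containment $\mZ_\CC(\LL)_{ad}\subset\LL$ is what trivializes the 2-cocycle that would otherwise obstruct lifting the pointed quotient to an honest pair $(G,\omega)$. Once Tannakianness is secured, the Etingof--Nikshych--Ostrik characterization of group-theoretical modular categories via their centers delivers the conclusion $\mZ(\CC)\simeq\Rep(D^{\omega}G)$, and hence that $\CC$ itself is group-theoretical.
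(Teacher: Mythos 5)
You should note at the outset that the paper offers no proof of this proposition at all: it is quoted directly from [DGNO, Corollary 4.14], so the comparison must be made with the argument in that reference. Your overall framework does agree with theirs --- group-theoreticity of a modular category is detected by a Tannakian Lagrangian subcategory of $\mZ(\CC)\simeq\CC\boxtimes\overline{\CC}$, and M\"uger's dimension formula is the right bookkeeping tool --- but both implications in your sketch break down exactly where the real work lies. For ($\Rightarrow$), the functor $\mZ(\CC)\to\CC$ you want to push $\Rep(G)$ through is not braided, so there is no reason its image should be symmetric, and this step fails concretely: take $\CC=\Rep(D(\Z_2))$ (the toric code, which is integral, modular, and group-theoretical) and let $\mathcal{E}\subset\CC\boxtimes\overline{\CC}$ be the Tannakian Lagrangian generated by the four objects $X\boxtimes X^{*}$. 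Its support in the first factor is all of $\CC$, which is modular and hence as far from symmetric as possible, while its image under the forgetful functor is just $\mathrm{Vec}$, which satisfies the required centralizer condition only because the toric code happens to be pointed. So no naive projection of an arbitrary Lagrangian produces $\LL$; the natural fix is to take the intersection $\mathcal{E}\cap(\CC\boxtimes\mathbf{1})$ rather than an image, and verifying that this intersection satisfies $\mZ_\CC(\LL)_{ad}\subset\LL$ requires a Goursat-type classification of fusion subcategories of $\CC\boxtimes\overline{\CC}$, which is where the substance of the reference's proof lies.

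For ($\Leftarrow$) there are two gaps. First, your Lagrangian candidate is never actually defined: ``diagonal-type objects from $\LL$ together with additional generators coming from the universal grading group'' does not specify a tensor subcategory, and for non-pointed $\LL$ the collection $\{X\boxtimes X^{*}\}$ is not even closed under tensor product. Second, and more seriously, your resolution of what you correctly identify as the main obstacle is wrong: integrality of $\CC$ does \emph{not} rule out super-Tannakian $\LL$. In the toric code again, $\LL=\{\mathbf{1},f\}\cong\mathrm{sVec}$ is symmetric but not Tannakian, and it satisfies the hypotheses of the proposition ($\mZ_\CC(\LL)=\LL$, hence $\mZ_\CC(\LL)_{ad}=\mathrm{Vec}\subset\LL$) inside an integral modular category. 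So any correct proof of this direction must handle super-Tannakian $\LL$ head-on rather than exclude it; integrality enters elsewhere (group-theoretical categories are integral, so it is a necessary hypothesis), not as a device to force $\LL\cong\Rep(G)$. The de-equivariantization mechanism you gesture at --- that $\mZ_\CC(\LL)$ becomes pointed after de-equivariantizing by $\LL$ --- is indeed the right engine when $\LL$ is Tannakian, but as written it is the first step of the argument, not a proof.
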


Now consider an integral $N$-metaplectic modular category. There are three cases to consider depending on the value of $N$. The relevant fusion rules for each of these cases are listed below (note: fusion rules for $\sqrt{N}$ and $\sqrt{\frac{N}{2}}$ dimensional objects are not included, but can be found here \cite{ACRW,BGPR}). 

As stated in \cite{GRR}, the unitary modular category $SO(N)_2$ for odd $N >$ 1 has two simple objects, $X_1 , X_2$ of dimension $\sqrt{N}$, two simple objects \1, $Z$ of dimension 1, and $\frac{N-1}{2}$ objects $Y_i , i = 1, ... , \frac{N-1}{2}$ of dimension 2 \cite{ACRW}. Observe that, in order for a metaplectic modular category to be integral, N must be a perfect square. Observe that, in order for a metaplectic modular category to be integral, $N$ must be a perfect square. 
The fusion rules are \cite{ACRW}:

\begin{enumerate}
\item $Z \ot Y_{i} \cong Y_{i}, Z^{\ot2} \cong \1$

\item $Y_{i} \ot Y_{j} \cong Y_{min\{i+j, N-i-j\}} \oplus Y_{|i-j|}, $ for $i \neq j$ 

\item $Y_{i}^{\ot2} = \1 \oplus Z \oplus Y_{min\{2i,N-2i\}}$.
\end{enumerate}

 The unitary modular category $SO(N)_2$ where $N \equiv 2$ (mod 4) has rank $k+7$, where $2k = N$. The group of isomorphism classes of invertible objects is isomorphic to $\Z_4$. We will denote by $g$ a generator of this group, and will abuse notation referring to the invertible objects as $g^j$. There are $k-1$ self-dual simple objects, $Y_i$, of dimension 2. The remaining four simples, $V_i$, have dimension $\sqrt{k}$. The fusion rules re-written from \cite{GRR} are:

\begin{enumerate}
\item $g \ot Y_{i} \cong Y_{k-i}, g^2 \ot Y_{i} \cong Y_{i}$

\item $Y_{i}^{\ot2} \cong \1 \oplus g^2 \oplus Y_{min \{2i, 2k-2i\}}$

\item $Y_{i} \ot Y_{j} \cong Y_{min\{i+j, 2k-i-j\}} \oplus Y_{|i-j|},$ when $i + j \neq k$

\item $Y_{i} \ot Y_{j} \cong g \oplus g^3 \oplus Y_{|i-j|}$ when $i + j = k$.

\end{enumerate}

The modular category $SO(N)_2$ with $N \equiv 0$ (mod 4) where $2k = N$, has rank $k+7$ and dimension $4N$ \cite{GRR}. The simple objects have dimension 1, 2, and $\sqrt{k}$. All simple objects are self-dual, and the $Y_i$ have dimension 2, while $V_i$, $W_i$ have dimension $\sqrt{k}$. For $k > 2$ the key fusion rules re-written from \cite{GRR} and are as follows: 

\begin{enumerate}

\item $g \ot Y_{i} \cong f \ot Y_{i} \cong Y_{k-i}$

\item $Y_{i}^{\ot2} \cong \1 \oplus f \oplus g \oplus fg $, when $i = \frac{k}{2}$

\item $Y_{i}^{\ot2} \cong \1 \oplus fg \oplus Y_{min \{2i, 2k-2i\}}$, when $i \neq \frac{k}{2}$

\item $Y_{i} \ot Y_{j} \cong Y_{min\{i+j, 2k-i-j\}} \oplus Y_{|i-j|}$, when $i + j \neq k$

\item $Y_{i} \ot Y_{j} \cong g \oplus f \oplus Y_{|i-j|}$, when $i + j = k$.
\end{enumerate}

\begin{proposition}
Let $\CC$ be a braided tensor category. $\CC$ is symmetric if and only it it coincides with its symmetric center $\mZ_{\CC} (\CC)$ \cite{M}.
\end{proposition}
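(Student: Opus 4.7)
The plan is to verify both implications by directly unpacking the definitions recalled earlier in this section. The only substantive ingredient is the Müger characterization already noted above, namely that two objects $X,Y$ centralize each other if and only if the double braiding $c_{Y,X}\circ c_{X,Y}$ equals $\text{id}_{X\otimes Y}$; granting this, both directions of the biconditional become essentially tautological restatements.

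For the forward direction, I would assume $\CC$ is symmetric, so that by definition $c_{Y,X}\circ c_{X,Y} = \text{id}_{X\otimes Y}$ for every pair $X,Y \in \CC$. By the Müger criterion this says that every object of $\CC$ centralizes every other object of $\CC$, and hence belongs to $\mZ_{\CC}(\CC)$. Since $\mZ_{\CC}(\CC)$ is by construction a full subcategory of $\CC$, the reverse inclusion is automatic, so $\CC = \mZ_{\CC}(\CC)$.

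For the converse, I would assume $\CC = \mZ_{\CC}(\CC)$. Then every $Y \in \CC$ lies in $\mZ_{\CC}(\CC)$, which means by definition that $Y$ centralizes every $X \in \CC$. Applying the Müger criterion in the opposite direction yields $c_{Y,X}\circ c_{X,Y} = \text{id}_{X\otimes Y}$ for all $X,Y \in \CC$, which is exactly the defining property of a symmetric braided tensor category.

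The main obstacle is essentially cosmetic: the proposition is simply a rephrasing of the definition of the symmetric center once Müger's theorem is invoked, so no computation or induction is required. The only point requiring care is that the centralizing condition, phrased via the $S$-matrix on simple objects, transfers without loss to the double-braiding condition on \emph{all} objects of $\CC$ by naturality and additivity of the braiding; this is precisely what the cited theorem of Müger provides, so I would simply refer to it rather than reprove it.
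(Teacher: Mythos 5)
Your argument is correct: the paper gives no proof of this proposition, simply citing M\"uger, and your definitional unpacking (symmetric $\Leftrightarrow$ every double braiding is trivial $\Leftrightarrow$ every object centralizes every object $\Leftrightarrow$ $\CC = \mZ_{\CC}(\CC)$) is exactly the intended content, with the only substantive input being the cited equivalence between the $S$-matrix condition and triviality of the double braiding. Nothing is missing.
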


\begin{proposition}
Let $\CC$ be a modular category and $\LL \subset \CC$ be a semisimple tensor subcategory.  Then $dim(\LL) dim(\mZ_{\CC} (\LL)) = dim(\CC)$ \cite{M}.
\end{proposition}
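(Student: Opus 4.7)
The plan is to evaluate one explicit sum over $\mathrm{Irr}(\CC)$ in two different ways, using the Müger $S$-matrix criterion for centralization (stated immediately above) and the non-degeneracy identity $S^{2}=\dim(\CC)\,C$ (where $C$ is charge-conjugation) that holds in any modular category.

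First I would introduce the \emph{regular object} $R_{\LL}:=\sum_{Y}\dim(Y)\,Y$ of $\LL$, the sum taken over simple $Y\in\LL$, viewed inside the complexified Grothendieck ring $K(\CC)\otimes\C$.  Because $\LL$ is a semisimple tensor subcategory, multiplication by any simple $Y'\in\LL$ on the left fixes $R_{\LL}$ up to the scalar $\dim(Y')$, hence $R_{\LL}^{2}=\dim(\LL)\,R_{\LL}$.  Applying the Verlinde character $\chi_{X}(A):=S_{X,A}/\dim(X)$, which is a ring homomorphism $K(\CC)\otimes\C\to\C$, forces $\chi_{X}(R_{\LL})^{2}=\dim(\LL)\,\chi_{X}(R_{\LL})$, and therefore $\chi_{X}(R_{\LL})\in\{0,\dim(\LL)\}$.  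The Müger criterion picks out the dichotomy: $S_{X,Y}/\dim(X)=\dim(Y)$ for every simple $Y\in\LL$ precisely when $X\in\mZ_{\CC}(\LL)$, so $\chi_{X}(R_{\LL})=\dim(\LL)$ in that case and $\chi_{X}(R_{\LL})=0$ otherwise.

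Now evaluate $\Sigma:=\sum_{X\in\mathrm{Irr}(\CC)}\dim(X)^{2}\,\chi_{X}(R_{\LL})$ in two ways.  The dichotomy above immediately gives $\Sigma=\dim(\LL)\cdot\dim(\mZ_{\CC}(\LL))$.  On the other hand, expanding $\chi_X(R_\LL)=\dim(X)^{-1}\sum_{Y}\dim(Y)\,S_{X,Y}$ and swapping the order of summation, the modular orthogonality $\sum_{X}S_{\one,X}S_{X,Y}=(S^{2})_{\one,Y}=\dim(\CC)\,\delta_{Y,\one}$ collapses the result to $\Sigma=\dim(\CC)$, since $\one$ is the unique index of $\mathrm{Irr}(\LL)$ surviving the Kronecker delta (using that $\LL$ contains the unit).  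Equating the two expressions yields $\dim(\LL)\,\dim(\mZ_{\CC}(\LL))=\dim(\CC)$.

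I expect the main obstacle to be verifying the dichotomy with the correct case split, since it rests on two separately nontrivial ingredients: the Verlinde ring-homomorphism property of $\chi_{X}$ and the Müger $S$-matrix criterion.  Once both are in hand, the concluding double-sum manipulation requires only routine bookkeeping with the non-degeneracy identity $S^{2}=\dim(\CC)\,C$.
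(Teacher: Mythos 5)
The paper does not actually prove this proposition --- it is quoted from M\"uger's \emph{On the structure of modular categories} with a citation and no argument --- so there is nothing in-paper to compare against; your proof is essentially the standard one (and close in spirit to M\"uger's own). The regular-element relation $R_{\LL}^{2}=\dim(\LL)R_{\LL}$, the use of the Verlinde characters $\chi_{X}$ as ring homomorphisms on $K(\CC)\otimes\C$, and the double evaluation of $\Sigma$ via $S^{2}=\dim(\CC)C$ are all sound; in particular the second evaluation correctly collapses to $\dim(\CC)$ because $\one$ is the only simple object of $\LL$ surviving the Kronecker delta.

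The one genuine gap is in the sentence ``so $\chi_{X}(R_{\LL})=\dim(\LL)$ in that case and $\chi_{X}(R_{\LL})=0$ otherwise.'' The forward half is a direct computation, but the ``otherwise'' half does not follow from what you have established: the quadratic relation only tells you $\chi_{X}(R_{\LL})\in\{0,\dim(\LL)\}$, and M\"uger's criterion, being an iff for the pointwise equality $S_{XY}=\dim(X)\dim(Y)$, does not by itself exclude an $X\notin\mZ_{\CC}(\LL)$ for which the \emph{sum} $\sum_{Y}\dim(Y)S_{XY}/\dim(X)$ nevertheless equals $\dim(\LL)$. Without ruling that out, your first evaluation only yields $\Sigma\geq\dim(\LL)\dim(\mZ_{\CC}(\LL))$, hence only the inequality $\dim(\LL)\dim(\mZ_{\CC}(\LL))\leq\dim(\CC)$. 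The fix is standard and uses an ingredient you did not invoke: the balancing equation (Definition 2.7 of the paper) gives $S_{XY}=(\theta_{X}\theta_{Y})^{-1}\sum_{k}N^{k}_{X^{*}Y}d_{k}\theta_{k}$, whence $|S_{XY}|\leq\dim(X)\dim(Y)$; then $\chi_{X}(R_{\LL})=\sum_{Y}\dim(Y)S_{XY}/\dim(X)=\sum_{Y}\dim(Y)^{2}$ forces every summand to attain its modulus bound with positive real value, i.e.\ $S_{XY}=\dim(X)\dim(Y)$ for every simple $Y\in\LL$, and M\"uger's criterion then places $X$ in $\mZ_{\CC}(\LL)$. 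With that one estimate inserted, the dichotomy holds exactly as you state it and the rest of your argument closes the proof.
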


\begin{definition}
The balancing equation $\theta_{i}\theta_{j}S_{ij} = \sum_{k \in Irr(\CC)}N^k_{i^{*}j}d_k\theta_k$
\end{definition}

\begin{lemma}
Every integral metapletic modular category has a symmetric subcategory $\LL$.  
\begin{enumerate}[(i)]
\item For $N$ odd, $\LL$ is  generated by $\1, Z$ and $Y_{it}$ where $t = \sqrt{N}$ and $1 \leq i \leq \frac{t-1}{2}$.  
\item For $N \equiv 2$ $(mod \text{ } 4)$, $\LL$ is generated by $\1, g^2$ and $Y_{2ln}$ where $l = \sqrt{k}$ and $1 \leq n \leq \frac{l-1}{2}$.  
\item For $N \equiv 0$ $(mod \text{ } 4)$, $\LL$ is generated by $\1, f, g, fg$ and $Y_{2lm}$ where $1 \leq m \leq \frac{l - 2}{2}$.
\end{enumerate}
\end{lemma}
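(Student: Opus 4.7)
The plan is to verify in each of the three cases that the listed generators $\{\1, Z, Y_{it}\}$, $\{\1, g^2, Y_{2ln}\}$, and $\{\1, f, g, fg, Y_{2lm}\}$ span a fusion subcategory closed under tensor product and whose simple objects pairwise centralize. By Proposition 2.2, pairwise centralization within $\LL$ forces $\LL=\mZ_\LL(\LL)$, which is exactly the statement that $\LL$ is symmetric.

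First, I would verify closure under fusion directly from the fusion rules recorded before the lemma. Integrality forces $N$ (respectively $k$) to be a perfect square, providing the integers $t=\sqrt{N}$ and $l=\sqrt{k}$. In case (i), every summand appearing in $Y_{it}\otimes Y_{jt}$ or $Y_{it}^{\otimes 2}$ has index in $\{(i+j)t,\,(t-i-j)t,\,|i-j|t,\,(t-2i)t\}$, each an integer multiple of $t$; the invertible summands produced are $\1$ and $Z$, which are already in the subcategory. The analogous check goes through in cases (ii) and (iii): all $Y$-indices that appear are multiples of $2l$, and the exceptional fusion $Y_i\otimes Y_j = g\oplus g^3\oplus Y_{|i-j|}$ (resp.\ $g\oplus f\oplus Y_{|i-j|}$) when $i+j=k$ is exactly why one must include the full group of invertible objects in the generating set; in case (ii) the parity of $l$ makes this exceptional case vacuous for our generators, while in case (iii) the Klein four group absorbs the invertible summands that do appear.

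Second, I would verify centralization by means of the balancing equation. Pairs involving an invertible generator reduce to a one-term sum on the right and are immediate once one records the twists of the invertible objects. For a pair of two-dimensional generators, the balancing equation reads
\[
\theta_{Y_{it}}\theta_{Y_{jt}}\,S_{Y_{it},Y_{jt}} \;=\; 2\,\theta_{Y_a} + 2\,\theta_{Y_b}
\]
in the case $i\neq j$ (with the self-product case yielding $1+\theta_Z+2\,\theta_{Y_m}$, and similar expressions in cases (ii) and (iii)). Using the explicit twist formulas for $SO(N)_2$ in \cite{ACRW,GRR}, in which $\theta_{Y_m}$ is a fixed root of unity raised to the exponent $m^2$ up to a case-dependent global shift, the perfect-square assumption ensures that the exponents $(it)^2,(jt)^2$ and all cross-terms are divisible by the relevant modulus ($2N$ in case (i), $4k$ in cases (ii), (iii)), so that every $Y$-generator of $\LL$ carries the \emph{same} twist. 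The right-hand side then collapses to $4\,\theta_{Y_{it}}\theta_{Y_{jt}}$, giving $S_{Y_{it},Y_{jt}}=4=\dim(Y_{it})\dim(Y_{jt})$, as required.

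The main obstacle is the twist arithmetic in the second step: one must carefully extract twist data from \cite{ACRW,GRR} for each of the three families and verify that the quadratic exponents on the chosen generators collapse modulo the relevant denominator. Case (iii) is the most delicate because $l$ must be even and one must additionally check that $\theta_f,\theta_g,\theta_{fg}$ are compatible enough for the entire Klein four subgroup to sit inside $\LL$ symmetrically. The integrality hypothesis enters precisely here: if $N$ (resp.\ $k$) were not a perfect square, no arithmetic progression of $Y$-indices would carry a single common twist and the centralization identity would fail.
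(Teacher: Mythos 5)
Your overall strategy---check closure under fusion directly, then establish pairwise centralization via the balancing equation and explicit twists---is a legitimate route and genuinely different from the paper's. The paper never computes a single twist of a $Y_i$: it instead uses M\"uger's dimension formula $\dim(\LL)\dim(\mZ_\CC(\LL))=\dim(\CC)$ to pin down $\dim(\mZ_\CC(\LL))$, locates $\mZ_\CC(\LL)$ inside the trivial component of the universal grading, and then de-equivariantizes by the boson ($Z$, $g^2$, or $fg$) so that the trivial component becomes the pointed category $\Z_N$, which has a \emph{unique} fusion subcategory of each admissible dimension. That uniqueness forces $\mZ_\CC(\LL)=\LL$, hence $\mZ_\LL(\LL)=\LL$. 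The structural argument buys uniformity over the whole classification: it applies to every modular category with these fusion rules at once, whereas your computation must be run against every admissible assignment of twists (the categories in the lemma are not just $SO(N)_2$ itself but all modular categories sharing its fusion rules, and these have genuinely different $T$-matrices). Your closure checks, including the observation that $l$ odd makes the exceptional fusion $Y_i\ot Y_j\cong g\oplus g^3\oplus Y_{|i-j|}$ vacuous in case (ii), are correct and are a detail the paper leaves implicit.

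There is, however, a genuine gap in the centralization step as written. Knowing that all the $Y$-generators carry the \emph{same} twist $\theta$ does not make the right-hand side of the balancing equation collapse to $4\,\theta_{Y_{it}}\theta_{Y_{jt}}$: with a common twist $\theta$ the equation reads $\theta^2 S_{Y_{it},Y_{jt}} = 2\theta+2\theta=4\theta$, giving $S=4/\theta$, and the self-fusion case gives $\theta^2 S = 1+\theta_Z+2\theta$. Both yield $S=4$ only when $\theta=1$ (e.g.\ if all the relevant twists were $-1$, one would get $S=-4$ and centralization would fail). So the claim you actually need is that every $Y_{it}$ (resp.\ $Y_{2ln}$, $Y_{2lm}$) is a \emph{trivial-twist} object, not merely that their twists agree; this does hold, because the quadratic exponent $(mt)^2=m^2N$ kills the root of unity and the residual sign $(-1)^{m(t-m)}$ vanishes for $t$ odd, but that computation---together with the analogous one for $\theta_f,\theta_g,\theta_{fg}$ in case (iii), where you must rule out fermions among the invertibles to keep the Klein four-group symmetric---is exactly the content of the proof and is deferred in your write-up. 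Until the twist values are extracted from the classification and shown to be $1$ in every case, the centralization identity, and hence the symmetry of $\LL$, is not established.
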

\begin{proof}
First, we examine the case of $N$ odd in detail. We need to show that $\LL$ is symmetric, that is, by Proposition 2.5, $\LL = \mZ_{\LL} (\LL)$.  By Proposition 2.6, $dim(\mZ_{\CC} (\LL)) = 2t$.  

There is a faithful $\Z_2$ grading on $\CC$:  $$\CC_{\1} = \{\1, Z, Y_i\} \\ \CC_{Z} = \{X_i\}$$ We know that $\CC_{pt} \subset \LL$.  Applying the fact that $\mZ_{\CC} (C_{pt}) = \CC_{\1}$ \cite{GN}, we know that $\mZ_{\CC} (\LL) \subset \CC_{\1}$.  

The object Z is a boson \cite[Lemma 3.3]{ACRW}.  We perform the de-equivariantization of $\CC$ by $\langle Z \rangle \cong \Rep(\Z_2)$ as the trivial component of the de-equivariantization $D_0$, the image of $\CC_{\1}$, is isomorphic to $\Z_{N}$ \cite[Lemma 3.4]{ACRW}.  The de-equivariantization functor maps $ \1$ and $Z$ to $0$, and each $Y_i$ to the simple objects $i \oplus -i \in \Z_{N}$.  The subcategory tensor generated by $Y_i$ maps to $\langle i \rangle \in \Z_{N}$.  

As $\langle t \rangle$ is the only subgroup of order t, it is the image of the only subcategory of dimension $2t$, which must be $\mZ_{\CC} (\LL)$.  Now it is clear $\mZ_{\CC} (\LL)$ is the subcategory tensor generated by $Y_t$. Because $\LL$ is equal to its centralizer in $\CC$, it is symmetric.

The proof for the case of $N \equiv 2$ (mod 4) proceeds in the same fashion. By Proposition 2.6, $dim(\mZ_{\CC} (\LL)) = 4\ell$.  The faithful grading is by $\Z_4$:
\begin{align*} 
\CC_{\1} &= \{\1, g^2, Y_{i}\} \text{ where $i$ is even} \\
\CC_{g} &= \{ V_1, V_4 \} \\
\CC_{g^2} &= \{g, g^3, Y_i\} \text{ where $i$ is odd} \\
\CC_{g^3} &= \{V_2, V_3\}. 
\end{align*}
We know that $g^2$ is a boson \cite[Lemma 3.3]{BPR}.  As $g^2 \in \LL$, $\mZ_{\CC} (\LL) \subset \mZ_{\CC} (\langle g^2 \rangle)$. A dimension counting argument tells us that $\mZ_{\CC}(\langle g^2 \rangle)$ must be $\CC_{\1} \cup \CC_{g^2}$. Forming the de-equivariantization by $g^2$, the image of $\mZ_{\CC} (\LL)$ is contained within $\D_0$. Following the proof from \cite[Lemma 3.3]{ACRW} $\D_0 \cong \Z_N$, so it only contains the image of one subcategory of the correct size and $\mZ_{\CC} (\LL)$ is the subcategory tensor generated by $Y_l$.  Therefore, $\mZ_{\LL} (\LL) = \LL$ and $\LL$ is symmetric.

The proof for the case of $N \equiv 0$ (mod 4) proceeds in exact fashion as the proof for $N \equiv 2$ (mod 4), except in this case the faithful grading is by $\Z_2 \times \Z_2$,
\begin{align*} 
\CC_{\1} &= \{\1, f, g, fg, Y_{i}\} \text{ where $i$ is even} \\
\CC_{g} &= \{ V_1, V_2 \} \\
\CC_{f} &= \{W_1, W_2\}\\
\CC_{fg} &= \{Y_i\} \text{ where $i$ is odd}. 
\end{align*}
we de-equivariantize by the boson $fg$ \cite[Lemma 4.3]{BGPR}, and apply the balancing equation and dimension counting to show that $Y_i \in \mZ_{\CC} (fg)$,  the preimage of $\D_0$, for all $i$.

\end{proof}

\begin{theorem}
Integral metaplectic modular categories are group-theoretical.
\end{theorem}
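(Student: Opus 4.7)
The plan is to apply Proposition 2.2, which characterizes group-theoretical modular categories as precisely those integral modular categories admitting a symmetric subcategory $\LL$ satisfying $\mZ_\CC(\LL)_{ad} \subset \LL$. Integrality holds by hypothesis, and Lemma 2.8 already supplies a symmetric subcategory $\LL$ in each of the three parity cases ($N$ odd, $N \equiv 2 \pmod 4$, $N \equiv 0 \pmod 4$). Thus the only remaining step is to verify the adjoint containment.

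For $N$ odd, the proof of Lemma 2.8 identifies $\mZ_\CC(\LL)$ with the subcategory tensor-generated by $Y_t$, and iterated fusion products using $Y_i \ot Y_j \cong Y_{\min\{i+j, N-i-j\}} \oplus Y_{|i-j|}$ show this coincides with $\LL$ itself. Since every fusion subcategory is closed under the operation $X \mapsto X \ot X^*$, the containment $\mZ_\CC(\LL)_{ad} \subset \LL$ is automatic in this case. In the two even cases the situation is more delicate, because $\mZ_\CC(\LL)$ is strictly larger than $\LL$: from Lemma 2.8 it is tensor-generated by $Y_l$ and contains additional invertibles (for example $g, g^3$ when $N \equiv 2 \pmod 4$) together with the $Y_{jl}$ for $j$ odd. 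Here my approach is to enumerate the simple summands of $\mZ_\CC(\LL)$ explicitly and compute $X \ot X^*$ for each using the listed fusion rules. Invertible summands contribute only $\1 \in \LL$. Each $Y_{jl}$ summand satisfies $Y_{jl}^{\ot 2} \cong \1 \oplus b \oplus Y_{\min\{2jl, 2k-2jl\}}$ with $b=g^2$ when $N \equiv 2 \pmod 4$ and $b=fg$ when $N \equiv 0 \pmod 4$; the $Y$-summand has index of the form $2lm$ and hence lies in $\LL$ by the definition of $\LL$ given in Lemma 2.8. The exceptional fusion rule $Y_i \ot Y_j \cong g \oplus f \oplus Y_{|i-j|}$ (or its analogue for $N \equiv 2 \pmod 4$) that activates when $i+j = k$ introduces only further invertible summands, all of which are already in $\LL$, so it does not spoil the containment.

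Combining the three cases yields $\mZ_\CC(\LL)_{ad} \subset \LL$, and Proposition 2.2 then gives that $\CC$ is group-theoretical. The principal obstacle is the case-by-case fusion-rule bookkeeping in the even parities, where $\mZ_\CC(\LL) \supsetneq \LL$ and one must verify that every summand of $X \ot X^*$ for $X$ ranging over the simples of $\mZ_\CC(\LL)$ actually lands in $\LL$; the $N$ odd case, by contrast, reduces essentially formally to Lemma 2.8.
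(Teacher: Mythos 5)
Your proposal follows exactly the paper's route: Proposition 2.2 combined with the symmetric subcategory $\LL$ and its centralizer from Lemma 2.8, with the containment $\mZ_\CC(\LL)_{ad} \subset \LL$ checked summand-by-summand from the fusion rules (the paper merely asserts this last step is ``clear,'' so your bookkeeping is the fleshed-out version of the same argument). One small correction to your justification in the $N \equiv 2 \pmod 4$ case: the invertibles $g, g^3$ produced by the exceptional rule are \emph{not} in $\LL$ (which contains only $\1$ and $g^2$ among invertibles); the containment survives there only because $X \ot X^{*} = Y_{jl}^{\ot 2}$ can never trigger that rule, since $i+j = 2jl = k = l^2$ is impossible when $l$ is odd.
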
\label{thm:GT}

\begin{proof}

We examine the symmetric subcategory $\LL$ we defined for each case. Using the fusion rules for each case and $\mZ_{\CC}(\LL)$, which we found in the proof of our previous result, it is clear that $(\mZ_{\CC}(\LL))_{ad} \subset \LL$. Therefore, in each case $\CC$ is group theoretical.

\end{proof}

\section{Categories With Fusion Rules of $SO(8)_2$}

Categories with the fusion rules of $SO(8)_2$ present an interesting special case.  In these categories, objects $V_1, V_2, W_1, W_2$ of dimension $\sqrt{\frac{N}{2}}$ have the same dimension as objects $Y_i$, creating extra symmetry.  

\begin{definition}
A category $\CC$ is \textbf{group theoretical} if and only if its Drinfeld center $\mZ (\CC)$ is equivalent to the representation category of the twisted double of some finite group $G$:
    $$\mZ (\CC) \cong \text{Rep }D^{\omega}(G)$$
\end{definition}

Having determined that categories with the fusion rules of $SO(8)_2$ are group theoretical, it is natural to ask which finite group they \textquotedblleft come from."  Applying the facts that $|G|^2 = \text{dim(Rep}D^\omega (G))= \text{dim} \mZ (\CC) = (\text{dim} \CC)^2 = 1024$ and $\text{rank} (\text{Rep}(D^\omega {G})) = \text{rank} (\mZ (\CC)) = (\text{rank} \CC )^2 = 256$,  we search the database of modular data for twisted Drinfeld double of finite groups of order less than 47 created by Angus Gruen \cite{AG} for pairs $(\omega, G)$ that fit these restrictions.  We determined that categories with the fusion rules of $SO(8)_2$ come from $G =$[32, 49], the inner holomorph of the dihedral group of order 8.  Notably, there are 72 Morita-equivalence classes of categories for which $\text{Rep}(D^{\omega}G)$ has the correct dimension and rank, and, for each of these $\omega$ is a nontrivial cocyle.  

\section{Link Invariants}
Since every topological quantum computation is equivalent to evaluating the link invariant associated with the anyonic system on some braid, another natural inquiry is to determine the link invariant associated with categories with fusion rules of $SO(8)_2$. The 2-variable Kauffman polynomial is known to be associated with  the quantum group $U_{q,so(n)}$.  Examining the work of Tuba and Wenzl, we know the invariant associated with these categories is Wenzl's parameterization of the 2-variable Kauffman polynomial, evaluated at $q = e^{\frac{i \pi}{8}}, r = -q^{-1}$ \cite{TW}.  The extended Property \textbf{F} conjecture predicts that the link invariant associated with a braided fusion category $\CC$ with Property \textbf{F} is classical; therefore, we expect this evaluation to produce a classical invariant.  

In the following definitions, $D$ is the diagram of a link. $D_+$, $D_-$, $D_0$, and $D_\infty$ are identical diagrams except near a point where they are \\

\begin{figure}[H]
\centering
    \begin{subfigure}{10 mm}
        \includegraphics[width=\textwidth]{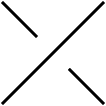}
        \caption*{$D_+$}
    \end{subfigure}
    \hspace{10mm}
    \begin{subfigure}{10 mm}
        \includegraphics[width=\textwidth]{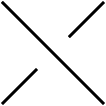}
        \caption*{$D_-$}
    \end{subfigure}
    \hspace{10mm}
    \begin{subfigure}{10 mm}
        \includegraphics[width=\textwidth]{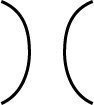}
        \caption*{$D_0$}
    \end{subfigure}
    \hspace{10mm}
     \begin{subfigure}{11 mm}
        \includegraphics[width=\textwidth]{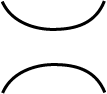}
        \caption*{$D_{\infty}$}
    \end{subfigure}
\end{figure}

\begin{definition}
The semi-oriented Kauffman polynomial is a function $$K: \{\text{Oriented links in } S^3\} \rightarrow \Z [a^{\pm 1}, z^{\pm 1}]$$
where $\Z [a^{\pm 1}, z^{\pm 1}]$ is the Laurent polynomial over $a$ and $z$.
It is uniquely defined by $K(L) = a^{-w(D)} \tilde{K} (D)$ where $$\tilde{K} :  \{\text{Oriented diagrams of links}\} \rightarrow \Z [a^{\pm 1}, z^{\pm 1}]$$ is uniquely determined by:
\begin{enumerate}[(I)]
    \item $\tilde{K} (\bigcirc) = 1$,
    \item $\tilde{K}$ is invariant under regular isotopy,
    \item $a^{-1}\tilde{K}\bigl(\begin{smallmatrix}
\includegraphics[width = 2.5 mm]{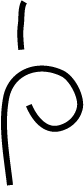}
\end{smallmatrix} \bigr) = \tilde{K} (D) = a\tilde{K} \bigl(\begin{smallmatrix}
\includegraphics[width = 2.5 mm]{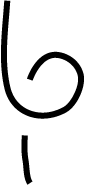}
\end{smallmatrix} \bigr)$
    \item $\tilde{K} (D_+) + \tilde{K} (D_-) = z(\tilde{K} (D_0) + \tilde{K} (D_{\infty})$
\end{enumerate}
\end{definition}

\begin{definition}
Wenzl's parameterization of the Kauffman polynomial is $$W : \{\text{Oriented links in } S^3\} \rightarrow \Z [r^{\pm, 1}, q^{\pm 1}]$$ where $\Z [q^{\pm 1}, r^{\pm 1}]$ is the Laurent polynomial over $r$ and $q$. It is uniquely defined by $W(L) = r^{-w(D)} \tilde{W}(D)$ where 
$$\tilde{W} :  \{\text{Oriented diagrams of links}\} \rightarrow \Z [r^{\pm 1}, q^{\pm 1}]$$ is uniquely determined by: 
\begin{enumerate}[(i)]
  \item $\tilde{W}(\bigcirc) = 1$
    \item $\tilde{W}$ is invariant under regular isotopy. 
    \item $r\tilde{W}\bigl(\begin{smallmatrix}
\includegraphics[width = 2.5 mm]{twist1.png}
\end{smallmatrix} \bigr) = \tilde{W}(|) = r^{-1}\tilde{W} \bigl(\begin{smallmatrix}
\includegraphics[width = 2.5 mm]{twist2.png}
\end{smallmatrix} \bigr)$
  \item $\tilde{W}(D_{-}) - \tilde{W}(D_{+}) = (q-q^{-1})(\tilde{W}(D_0) - \tilde{W}(D_{\infty})$
\end{enumerate}
\end{definition}

\begin{definition}
The Dubrovnik parameterization of the Kauffman polynomial is $$F : \{\text{Oriented links in } S^3\} \rightarrow \Z [r^{\pm, 1}, q^{\pm 1}]$$ where $\Z [q^{\pm 1}, r^{\pm 1}]$ is the Laurent polynomial over $r$ and $q$. It is uniquely defined by $F(L) = r^{-w(D)} \tilde{F}(D)$ where 
$$\tilde{F} :  \{\text{Oriented diagrams of links}\} \rightarrow \Z [\alpha^{\pm 1}, \omega^{\pm 1}]$$ is uniquely determined by: 
\begin{enumerate}[(i)]
  \item $\tilde{F}(\bigcirc) = 1$
    \item $\tilde{F}$ is invariant under regular isotopy. 
    \item $\alpha^{-1}\tilde{F}\bigl(\begin{smallmatrix}
\includegraphics[width = 2.5 mm]{twist1.png}
\end{smallmatrix} \bigr) = \tilde{F}(|) = \alpha\tilde{F} \bigl(\begin{smallmatrix}
\includegraphics[width = 2.5 mm]{twist2.png}
\end{smallmatrix} \bigr)$
  \item $\tilde{F}(D_{+}) - \tilde{F}(D_{-}) = \omega(\tilde{F}(D_0) - \tilde{F}(D_{\infty})$
\end{enumerate}
\end{definition}

\begin{theorem}
The link invariant associated with categories with the same fusion rules as $SO(8)_2$ is classical and given by $$[W(L)]_{(r, q) = (-\frac{1}{q}, e^{\frac{i \pi}{8}})} = \frac{1}{2} \sum_{X \subset L} i^{\text{linking number} (X, L-X)}$$
\end{theorem}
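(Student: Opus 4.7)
The plan is to verify that both sides agree as link invariants by establishing a common recursive structure and matching them on a generating family of links. Denote the proposed right-hand side by $\Psi(L) := \frac{1}{2}\sum_{X\subset L} i^{\text{lk}(X, L-X)}$. Since linking numbers are isotopy invariants and sublinks are combinatorially determined, $\Psi$ is manifestly an ambient isotopy invariant. The strategy is to check that $W$, under the specialization $r = -q^{-1}$, $q = e^{i\pi/8}$, agrees with $\Psi$ on a generating collection of links and that both satisfy a common skein recursion; induction on crossing number then completes the identification.

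For the base case, note that at $r = -q^{-1}$ one has $r - r^{-1} = q - q^{-1}$, so the loop value for $\tilde{W}$ is $\delta = 1 + (r - r^{-1})/(q - q^{-1}) = 2$. The Wenzl invariant of the $n$-component unlink is therefore $\delta^{n-1} = 2^{n-1}$, matching $\Psi$ evaluated on the $n$-component unlink. Direct computations on the Hopf link and the $(2, k)$-torus links via the skein relation should then confirm that both invariants take the value $1 + i^k$ on a two-component link with linking number $k$, depending only on $k$ modulo $4$ --- a strong hint that the specialization is classical.

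For the inductive step, apply the Wenzl skein relation
\begin{equation*}
\tilde{W}(D_-) - \tilde{W}(D_+) = (q-q^{-1})(\tilde{W}(D_0) - \tilde{W}(D_\infty))
\end{equation*}
at a chosen crossing of a diagram $D$ and show that the analogous identity holds for $\Psi$. A case analysis is needed: when the two strands at the crossing lie on the same component of $L$, the smoothings $D_0$ and $D_\infty$ either split or preserve that component, with a predictable effect on the linking matrix; when they lie on distinct components, the smoothings fuse them. In each case one tracks how sublinks of $D_\pm$ correspond to sublinks of $D_0$ and $D_\infty$, and verifies that the contributions reproduce the coefficient $(q-q^{-1}) = 2i\sin(\pi/8)$ multiplied by $\Psi(D_0) - \Psi(D_\infty)$. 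Reducing this to a finite algebraic identity over $\Z[i]$ closes the induction.

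The main obstacle is establishing the recursion for $\Psi$: smoothing a crossing restructures the sublink combinatorics globally, so the skein identity for $\Psi$ is not formal. An alternative route that may prove cleaner is to exploit the group-theoretical identification $\mZ(\CC) \cong \text{Rep}(D^{\omega}G)$ established in Section 3, with $G$ the inner holomorph of the dihedral group of order 8, expressing the invariant as a Dijkgraaf--Witten state sum on the complement of $L$ with twist $\omega$, and identifying the resulting partition function with the Gauss sum on the right. This categorical approach sidesteps the skein manipulation but demands detailed information about $\omega$ and the cohomology of $G$.
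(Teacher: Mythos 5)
There is a genuine gap, and your proposal itself names it: the entire argument hinges on showing that $\Psi(L) = \frac{1}{2}\sum_{X\subset L} i^{\mathrm{lk}(X,L-X)}$ satisfies the specialized Kauffman/Wenzl skein recursion, and this step is deferred as ``the main obstacle'' rather than carried out. Without it, nothing is proved. The case analysis is genuinely delicate: the $D_\infty$ smoothing is incompatible with the orientations used to define linking numbers, which is precisely why the Kauffman polynomial is only semi-oriented and carries a writhe correction; tracking how $i^{\mathrm{lk}}$ behaves under a smoothing that merges or splits components, while simultaneously handling the writhe normalization $r^{-w(D)}$, is the entire content of the claim. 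Your induction scheme is also understated --- the skein relation relates $D_+$ and $D_-$ at the \emph{same} crossing number, so a single induction on crossing number does not close; one needs the standard double induction (on crossing number and on distance to a descending diagram) to know the recursion determines the invariant at all. Separately, your proposal takes the specialization $(r,q)=(-q^{-1},e^{i\pi/8})$ as given, but the theorem is about the invariant \emph{associated to the category}; one must first justify that this invariant is Wenzl's Kauffman polynomial at $r=q^{N-1}=q^7$ (via Tuba--Wenzl) and that the choice $q=e^{i\pi/8}$ is permissible (their Lemma 7.5) so that $q^7=-q^{-1}$. The suggested Dijkgraaf--Witten alternative is likewise only a sketch and would require the explicit cocycle $\omega$, which the paper does not compute.

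For comparison, the paper's proof avoids all of this by chaining known identities: it converts Wenzl's parameterization to the Dubrovnik form, applies Lickorish's Theorem~1 to pass to the semi-oriented Kauffman polynomial at $(a,z)=(-q^5,\,q^5+q^{-5})$, and then reads the classical evaluation directly off Lickorish's Table~1, where points with $z=a+a^{-1}$ are already identified with exactly this linking-number Gauss sum. In effect you are proposing to re-prove Lickorish's evaluation from scratch. That would be a legitimate, self-contained route --- your base cases ($\delta=2$, the $n$-component unlink giving $2^{n-1}$, and $1+i^k$ for two-component links of linking number $k$) are all correct and consistent with the target formula --- but as written the decisive skein verification for $\Psi$ is absent, so the proposal is a plan rather than a proof.
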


\begin{proof}
As $N=8$, we know that the link invariant associated with these categories is $W(L)$ with $r=q^7$ \cite{TW}.  Lemma 7.5 of \cite{TW} shows that Burman-Murikami-Wenzl algebras $BMW(q,q)$, the algebras associated with the 2-variable Kauffman polynomial, do not depend on the particular value of q.  Choosing $q=e^{\frac{i \pi}{8}}$, we have $r= -\frac{1}{q}$.

The only differences between the various parameterizations of the two-variable Kauffman polynomial lie in their twist and skein relations.  It is easy to see $$W(L) = [F(L)]_{(\alpha, \omega) = (r^{-1}, -(q-q^{-1}))}$$ Applying \cite[Theorem 1]{LICK}, then 
\begin{align*}
[W(L)]_{(r, q) = (-\frac{1}{q}, e^{\frac{i \pi}{8}})} &= (-1)^{c(L) - 1}[K(L)]_{(a,z) = (i r^{-1}, i(q-q^{-1}))} \\&= (-1)^{c(L) - 1}[K(L)]_{(a,z) = (-q^5, q^5 + (q^5)^{-1})}
\end{align*}

Then, taking an expression for this evaluation of $K(L)$ of \cite[Table 1]{LICK} we have,
$$[W(L)]_{(r, q) = (-\frac{1}{q}, e^{\frac{i \pi}{8}})} = \frac{1}{2} \sum_{X \subset L} i^{\text{linking number} (X, L-X)}$$
which is quite classical indeed.
\end{proof}

\end{document}